\def\grad{\nabla}
\def\div{\operatorname{div}}
\def\hat{\widehat}
\def\tilde{\widetilde}
\def\inter{\text{int}}
\def\K{\mathcal K}
\def\cydot{\leavevmode\raise.4ex\hbox{.}}
\title[PAT and TAT with an uncertain wave speed]{Photoacoustic and thermoacoustic tomography with an uncertain wave speed}
\author{Lauri Oksanen}
\author{Gunther Uhlmann}
\address{University of Washington,
Department of Mathematics,
Box 354350,
Seattle, WA 98195-4350.}
\email{lauri.oksanen@math.washington.edu}
\email{gunther@math.washington.edu}
\date{\today}
\subjclass{Primary: 35R30}
\keywords{Inverse problems, photoacoustic tomography, thermoacoustic tomography, wave equation, stability}
\begin{document}

\begin{abstract}
We consider the mathematical model of photoacoustic and thermoacoustic tomography in media with
a variable sound speed. When the sound speed is known, 
the explicit reconstruction formula of \cite{Stefanov2009a} can be used.
We study how a modelling error in the sound speed affects the reconstruction 
formula and quantify the effect in terms of a stability estimate.
\end{abstract}

\maketitle

\section{Introduction}

Coupled-physics imaging methods, also called hybrid methods, attempt to
combine the high resolution of one imaging method with the high contrast
capabilities of another through a physical principle.  One important
medical imaging application is breast cancer detection. Ultrasound provides
a high, sub-millimeter resolution, but suffers from low contrast.  On the
other hand, many tumors absorb much more energy of electromagnetic waves,
in some specific energy bands, than healthy cells. 

Photoacoustic tomography (PAT) \cite{Wang2009} consists of sending relatively harmless optical
radiation into tissues that causes heating, with increases of the
temperature in the millikelvin range, which results in the generation of
propagating ultrasound waves. This is the  photo-acoustic effect. 
The inverse problem then consists of
reconstructing the optical properties of the tissue. In Thermoacoustic
tomography (TAT), see e.g. \cite{Kruger1999}, low frequency microwaves,
with wavelengths on the order of $1m$, are sent into the medium. The
rationale for using the latter frequencies is that they are less absorbed
than optical frequencies.  PAT, TAT and
other couple-physics imaging techniques offer potential breakthroughs in
the clinical application of multi-wave methods to early detection of
cancer, functional imaging, and molecular imaging among others \cite{Xu2006, Wang2009}.

There are two steps in PAT and TAT. The first one is to solve the inverse source
problem for the wave equation. The source  measures the absorbed radiation and
varies from tissue to tissue. Once this is solved the second step is to determine the
optical or electromagnetic parameters from the internal information obtained in the first step. 
Here we are concerned with the first step that we now describe.


Let $u$ solve the initial value problem
\begin{equation}   \label{eq_wave}
\left\{
\begin{array}{rcll}
(\partial_t^2 -c^2\Delta)u &=&0 &  \mbox{in $(0,T)\times \R^n$},\\u|_{t=0} &=& f,\\ \quad \partial_t u|_{t=0}& =&0,
\end{array}\right.\end{equation}
where $T>0$ is fixed and $c$ is a smooth and strictly positive function on $\R^n$.
Let us assume that the source $f$ is supported in $M$, where $M\subset \R^n$ is
some bounded domain with smooth boundary. The measurements are modelled by the operator
\begin{align}	\label{def_measurement}
\Lambda_c f : = u|_{[0,T]\times \p M}.
\end{align}
The inverse problem is to reconstruct the unknown $f$ and $c$ from $\Lambda_c$. 

Most of the available results assume that the sound speed $c$ is known.
Concerning the constant speed case we refer to the survey paper \cite{Kuchment2008} and
the references therein.
In practice, there are many cases
when the constant sound speed model is inaccurate. For instance in breast imaging,
the different components of the breast, such as the glandular tissues, stromal tissues, cancerous
tissues and other fatty issues, have different acoustic properties. The variations between their
acoustic speeds can be as great as 10 percent \cite{Jin2006}. 
The case of a variable sound speed was thoroughly investigated in \cite{Stefanov2009a} and a reconstruction method for the source was proposed.
A numerical algorithm based on this method was developed in \cite{Qian2011} and compared with the standard time reversal method used for instance in \cite{Hristova2009, Hristova2008}.
The case of a discontinuous sound speed was considered in \cite{Stefanov2011a} which arises, for instance, in brain imaging. 

The paper \cite{Stefanov2011} deals with the case where the source $f$ is known and recovers the sound speed $c$ with one measurement under  the geometric assumption
that the domain is foliated by strictly convex hypersurfaces with respect to the Riemannian metric $g=\frac{1}{c^2} dx^2$ where $dx^2$ denotes the Euclidean
metric. In practice the wave speed is known only up to some uncertainty
and we would like to reconstruct both $c$ and $f$ given $\Lambda_c f$.  This problem is linear in $f$ and non-linear in $c.$ It is shown in \cite{Stefanov2012} that the linearized problem of recovering both parameters is unstable.



In this paper, we assume that our best guess for the wave speed $c_0$ is
close to the true wave speed $c$.
We will study how the modelling error $c - c_0$
affects the reconstruction of $f$ by the 
modified time-reversal method introduced in \cite{Stefanov2009a}.

\section{Statement of the results}

Let us begin by describing the reconstruction method introduced in \cite{Stefanov2009a}. 
To define the operator giving the reconstruction, 
we consider the time reversed wave equation on $M$,
\begin{align}
\label{eq_wave_reversal}
&\p_t^2 v - c^2 \Delta v = 0, & \text{in $(0,T) \times M$},
\\\nonumber& v|_{x \in \p M} = h, & \text{in $(0,T) \times \p M$},
\\\nonumber& v|_{t=T} = \Delta^{-1} h|_{t=T}, \quad \p_t v|_{t=T}= 0,  & \text{in $M$},
\end{align}
where $\Delta^{-1} h|_{t=T}$ is the solution of the Dirichlet problem on $M$,
\begin{align*}
\Delta \phi = 0\ \text{in $M$} \quad \text{and} \quad \phi = h|_{t=T}\ \text{on $\p M$}.
\end{align*}
Moreover, we define the operators
\begin{align*}
A_c h = v|_{t = 0} \quad \text{and} \quad K_c = 1 - A_c \Lambda_{c},
\end{align*}
where $v$ is the solution of (\ref{eq_wave_reversal}).
The method of \cite{Stefanov2009a} gives a reconstruction by the formula
\begin{align*}
f = R_c \Lambda_{c} f, \quad \text{where} \quad R_c := \sum_{m=0}^\infty K_c^m A_c.
\end{align*}

Let $c_0 \in C^\infty(\R^n)$ be strictly positive and let us consider the
reconstruction operator $R_{c_0}$ corresponding to $c_0$.
If $c_0$ is close to $c$, we expect the reconstruction 
\begin{align}
\label{def_reconstruction}
\tilde f = R_{c_0} \Lambda_{c} f.
\end{align}
to be close to $f$.
Indeed, in this paper we show that the difference $\tilde f - f$
is bounded by suitable norms of the measurement $\Lambda_c f$
and the modelling error $c - c_0$.

As mentioned above, the closely related problem to find $c$ given the pair $(\Lambda_c f, f)$ 
was considered in \cite{Stefanov2011}, and the problem was shown to be stable 
under the geometric assumption
that $M$ is foliated by strictly convex hypersurfaces with respect to the Riemannian metric $c^{-2} dx^2$.
We will impose a similar convexity condition below. 
Furthermore, we will assume that $\p M$ is strictly convex with respect to the Riemannian metric $c^{-2} dx^2$.
If this is the case,  
then the operator $\Lambda_{c}$ has the following regularity property
\begin{align*}
\Lambda_{c} : H_0^1(\K) \to H^1((0,T) \times \p M),
\end{align*}
where $\K \subset M^\inter$ is compact, see e.g. \cite{Stefanov2009a}.
Without the convexity assumption, a loss of smoothness up to degree $1/4$ is possible \cite{Tataru1998}.
We describe this phenomenon in more detail in the appendix below. 

The operator $K_c$ is compact if the state of the wave equation (\ref{eq_wave})
is smooth at $t=T$ irrespective of $f$. 
This is guaranteed if all unit speed geodesics starting from $M$ at $t=0$
exit $M$ before $t=T$ and do not re-enter before $t=T$.
To avoid technicalities related to re-entering singularities we assume below that 
$M$ is convex also in the sense that no geodesic exiting $M$ re-enters $M$.

\begin{theorem}
\label{thm_main}
Let $c_0 \in C^\infty(\R^n)$ be strictly positive and suppose that 
the Riemannian manifold $(M, c_0^{-2} dx^2)$ has a strictly convex function with no critical points
and that $M$ is strictly convex with respect to the Riemannian metric $c_0^{-2} dx^2$.
Let $\K \subset M^\inter$ be compact, let $C_c, C_f > 0$ and consider the functions
$f \in H^3(\R^n)$ and $c \in C^\infty(\R^n)$ satisfying
\begin{align*}
\norm{f}_{H^3(\K)} \le C_f, \quad \norm{c}_{C^2(\K)} \le C_c, \quad
\supp(f) \subset \K, \quad c = c_0\ \text{in $\R^n \setminus \K$}.
\end{align*}
There are $\epsilon_c, T, C > 0$ such that if $c$ satisfies also 
\begin{align*}
\norm{c - c_0}_{C^1(\K)} \le \epsilon_c
\end{align*}
then
\begin{align}
\label{main_stability}
\norm{(R_{c_0} - R_c) \Lambda_c f}_{H^1(M)} \le C \norm{c - c_0}_{L^\infty(M)} \norm{\Lambda_c f}_{H^1((0,T) \times \p M)}^{1/2}.
\end{align}
\end{theorem}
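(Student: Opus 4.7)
My plan rests on the exact reconstruction identity $R_c \Lambda_c = I$, which immediately yields
\[
(R_{c_0} - R_c)\Lambda_c f \;=\; R_{c_0}\Lambda_c f - f \;=\; R_{c_0}(\Lambda_c - \Lambda_{c_0})f.
\]
I would then establish the chain
\[
\|R_{c_0}(\Lambda_c - \Lambda_{c_0}) f\|_{H^1(M)} \;\les\; \|(\Lambda_c - \Lambda_{c_0}) f\|_{H^1((0,T) \times \p M)} \;\les\; \|c - c_0\|_{L^\infty(M)}\, \|f\|_{H^2(M)},
\]
and convert the $\|f\|_{H^2}$ factor into $\|\Lambda_c f\|_{H^1}^{1/2}$ at the end. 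Before starting, I would verify that the three hypotheses on $c_0$ (strict convexity of $\p M$ and of the foliation function with respect to the metric, together with the Neumann-series convergence defining $R_{c_0}$) are each open in the $C^1$ topology on the wave speed, so that they persist for every $c$ with $\|c-c_0\|_{C^1}\le\epsilon_c$. This simultaneously gives the identity $f = R_c\Lambda_c f$ and the uniform boundedness of both $R_c$ and $R_{c_0}$ on $H^1$, which supplies the first inequality in the chain above.

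The heart of the argument is the second inequality. Setting $w = u - u_0$, one has
\[
(\p_t^2 - c^2\Delta) w = (c^2 - c_0^2)\Delta u_0, \qquad w|_{t=0} = \p_t w|_{t=0} = 0.
\]
Duhamel's formula writes $w(t,\cdot) = \int_0^t v^{(s)}(t,\cdot)\,ds$, where $v^{(s)}$ is the solution of the homogeneous wave equation with speed $c$, starting at time $s$ with Cauchy data $\big(0,\, (c^2-c_0^2)\Delta u_0(s,\cdot)\big)$. The source is compactly supported in $\K$ and satisfies
\[
\|(c^2-c_0^2)\Delta u_0(s,\cdot)\|_{L^2(\R^n)} \;\les\; \|c-c_0\|_{L^\infty(\K)}\,\|f\|_{H^2}.
\]
The Stefanov-Uhlmann no-loss trace estimate, applied with the perturbed speed $c$ (whose metric $c^{-2}dx^2$ still makes $\p M$ strictly convex when $\epsilon_c$ is small), delivers
\[
\|v^{(s)}|_{\p M}\|_{H^1((s,T)\times\p M)} \;\les\; \|(c^2-c_0^2)\Delta u_0(s,\cdot)\|_{L^2(\R^n)},
\]
and Minkowski's integral inequality in $s$ yields the desired bound on $\|w|_{\p M}\|_{H^1}$.

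Finally, Sobolev interpolation combined with the stability $\|f\|_{H^1} \les \|\Lambda_c f\|_{H^1}$ (which follows at once from $f = R_c \Lambda_c f$ and uniform boundedness of $R_c$) gives
\[
\|f\|_{H^2} \;\le\; \|f\|_{H^1}^{1/2}\,\|f\|_{H^3}^{1/2} \;\les\; C_f^{1/2}\,\|\Lambda_c f\|_{H^1}^{1/2},
\]
and composing the three estimates produces (\ref{main_stability}).

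I expect the Duhamel step to be the main obstacle. A direct $H^1$-level energy estimate on $w$ would differentiate the source $(c^2-c_0^2)\Delta u_0$ and produce a bound of the form $\|c-c_0\|_{C^1}\|f\|_{H^3}$; the whole point of the Duhamel representation is to shift one derivative from the source onto the trace operator and reabsorb that loss through the hidden regularity of the boundary trace, which is itself a consequence of the strict convexity of $\p M$ with respect to $c^{-2}dx^2$. The final interpolation-plus-stability step is routine, but it is what produces the characteristic $\|\Lambda_c f\|_{H^1}^{1/2}$ dependence on the measurement in (\ref{main_stability}).
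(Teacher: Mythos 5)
Your overall skeleton matches the paper's: the identity $(R_{c_0}-R_c)\Lambda_c f = R_{c_0}(\Lambda_c-\Lambda_{c_0})f$, the bound of $\|(\Lambda_c-\Lambda_{c_0})f\|_{H^1((0,T)\times\p M)}$ by $\|c-c_0\|_{L^\infty}\|f\|_{H^2}$ via the hidden trace regularity for the inhomogeneous wave equation with right-hand side $(c^2-c_0^2)\Delta u$, and the final interpolation $\|f\|_{H^2}\le\|f\|_{H^1}^{1/2}\|f\|_{H^3}^{1/2}$ combined with the stability estimate $\|f\|_{H^1}\les\|\Lambda_c f\|_{H^1}$. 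Your Duhamel reduction of the inhomogeneous trace estimate to the homogeneous one is a legitimate alternative to the paper's direct FIO argument (Theorem \ref{thm_regularity}, proved via the forward parametrix and the canonical-graph property coming from strict convexity of $\p M$); that part is fine.

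The genuine gap is in the last step. You assert that the uniform boundedness of $R_c$ on $H^1$, uniformly over all $c$ with $\|c-c_0\|_{C^1}\le\epsilon_c$, follows because ``the Neumann-series convergence defining $R_{c_0}$ is open in the $C^1$ topology.'' This is exactly the point the paper flags as non-trivial: the authors state explicitly that they do not know how to obtain the uniform constant $C(c)$ in $\|f\|_{H^1}\le C(c)\|\Lambda_c f\|_{H^1}$ by perturbing the estimate for $c_0$. The obstruction is a loss of derivatives: $\Lambda_c-\Lambda_{c_0}$ is small only as an operator from $H^2$ (or $H^3$) to $H^1$, not from $H^1_0$ to $H^1$, so $K_c-K_{c_0}$ is not small in the operator norm on $H_0^1(M)$ and one cannot conclude $\|K_c\|<1$ uniformly, nor control $\|\sum_m K_c^m A_c\|$ uniformly, from the convergence of the series at $c_0$. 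The paper closes this gap with its main technical contribution: a Carleman/observability estimate with explicitly tracked constants (Theorem \ref{thm_obs_ineq} and Corollary \ref{cor_stable_observability}), which yields an observability constant depending only on quantities ($C_1$, $C_2$, $C_3$, $\beta_\ell$, $B_\ell$, $\tau$, $T$, the Friedrichs and trace constants) that vary continuously under $C^1$-small perturbations of the metric $c^{-2}dx^2$. Without some such quantitative, perturbation-stable observability argument, your final inequality $\|f\|_{H^1}\les\|\Lambda_c f\|_{H^1}$ with a constant uniform in $c$ is unjustified, and the proof of (\ref{main_stability}) is incomplete.
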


We emphasize that $(R_{c_0} - R_c) \Lambda_c f = \tilde f - f$, whence (\ref{main_stability})
gives a bound for the reconstruction error.


\section{The proof}

The existence of a strictly convex function on $(M, c_0^{-2} dx^2)$ implies that the wave equation
\begin{align*}
&\p_t^2 v - c_0^2 \Delta v = 0, & \text{in $(0,T) \times M$},
\end{align*}
is exactly controllable for large $T > 0$, see e.g. \cite{Yao1999}. 
This again implies that $(M, c_0^{-2} dx^2)$ is non-trapping, see e.g. \cite{Bardos1992}.
Thus $\sum_{m =0}^\infty K_{c_0}^m$ is a bounded operator on $H_0^1(M)$ by \cite{Stefanov2009a}.
Let us begin by showing that the reconstruction (\ref{def_reconstruction}) is well defined.
This is guaranteed by the next lemma.

\begin{lemma}
Let $f \in H_0^1(M) \cap H^2(M)$. Then $A_{c_0} \Lambda_c f \in H_0^1(M)$.
\end{lemma}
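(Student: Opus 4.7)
Write $h := \Lambda_c f$ and $v := A_{c_0} h$. The conclusion amounts to two properties of $v$: that $v \in H^1(M)$ and that its trace on $\p M$ vanishes. My plan is to (i) obtain $H^1$ regularity of the lateral data $h$ via hidden trace regularity for the forward problem (\ref{eq_wave}); (ii) solve the time-reversed Dirichlet problem (\ref{eq_wave_reversal}) by a standard energy estimate to get $v \in C([0,T]; H^1(M))$; and (iii) identify $v|_{t=0}|_{\p M}$ with $h|_{t=0} = f|_{\p M} = 0$.

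For step (i), extending $f \in H_0^1(M) \cap H^2(M)$ by zero yields a function in $H^1(\R^n)$, so the solution $u$ of (\ref{eq_wave}) lies in $C([0,T]; H^1(\R^n))$. The hypotheses inherited from Theorem~\ref{thm_main} give $c = c_0$ in $\R^n \setminus \K$ with $\K \subset M^\inter$, hence $c = c_0$ in a neighbourhood of $\p M$, and in particular $M$ is strictly convex also with respect to $c^{-2} dx^2$. The $H^1 \to H^1$ hidden trace regularity for $\Lambda_c$ quoted in the introduction therefore applies and yields $h \in H^1((0,T) \times \p M)$. This is the main technical step: the $H^1 \to H^1$ gain is afforded precisely by the convexity of $\p M$, and one must be careful to transfer the convexity hypothesis from the background speed $c_0$ to the true speed $c$ on the relevant boundary neighbourhood.

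For step (ii), the terminal value $\phi := \Delta^{-1} h|_{t=T}$ is the harmonic extension of the trace $h|_{t=T} \in H^{1/2}(\p M)$, so $\phi \in H^1(M)$ with $\phi|_{\p M} = h|_{t=T}$; this provides the compatibility of lateral and terminal data at the corner $\{t=T\} \times \p M$. Standard well-posedness of the Dirichlet problem for the wave equation with smooth positive coefficient $c_0$ then produces a unique $v \in C([0,T]; H^1(M)) \cap C^1([0,T]; L^2(M))$, so in particular $v|_{t=0} \in H^1(M)$. For step (iii), the lateral boundary condition $v = h$ on $(0,T) \times \p M$ together with continuity in time gives $v|_{t=0}|_{\p M} = h|_{t=0} = u|_{t=0}|_{\p M} = f|_{\p M}$, which vanishes because $f \in H_0^1(M)$; hence $v|_{t=0} \in H_0^1(M)$, as required. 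Once step (i) is in hand, steps (ii) and (iii) are direct applications of classical wave equation theory and trace continuity.
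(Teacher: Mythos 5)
Your argument reaches the conclusion by a genuinely different route from the paper's, and it is workable in the setting where the lemma is actually applied, but it is both heavier and more restrictive than the paper's own proof. The paper never estimates the lateral trace $h=\Lambda_c f$ at all: it brings in the forward solution $u$ of (\ref{eq_wave}) and studies the difference $w=u-v$, which solves a $c_0$-wave equation with source $(c^2-c_0^2)\Delta u\in C([0,T];L^2(M))$, \emph{homogeneous} lateral Dirichlet data (since $u$ and $v$ share the boundary trace $h$), and terminal data $(u|_{t=T}-\phi,\ \p_t u|_{t=T})\in H_0^1(M)\times L^2(M)$; standard energy estimates for the homogeneous Dirichlet problem then give $w\in C([0,T];H_0^1(M))$ and $A_{c_0}\Lambda_c f=v|_{t=0}=f-w|_{t=0}\in H_0^1(M)$, with no microlocal input. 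By contrast, your step (i) invokes the hidden $H^1\to H^1$ regularity of $\Lambda_c$, which requires $\supp f\subset\K$ with $\K$ compactly contained in $M^\inter$ together with the strict convexity of $\p M$ --- hypotheses of Theorem \ref{thm_main} but not of the lemma as stated; without them the trace can lose up to $1/4$ of a derivative (see the appendix), your step (ii) would then only yield $v\in C([0,T];H^{3/4}(M))$, and the $H^1$ conclusion would fail. Your step (ii) is also not a bare ``standard energy estimate'': the non-homogeneous Dirichlet problem with $H^1$ lateral data needs the sharp regularity of \cite{Lasiecka1986}, which is exactly the result the paper quotes for the boundedness of $A_{c_0}$. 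Your step (iii) is fine: $v\in C([0,T];H^1(M))$ makes $t\mapsto v(t)|_{\p M}$ continuous into $L^2(\p M)$, $H^1((0,T)\times\p M)$ embeds into $C([0,T];L^2(\p M))$, and the a.e.\ identity $v(t)|_{\p M}=h(t)$ passes to $t=0$, where $h(0)=f|_{\p M}=0$. In short, your proof buys nothing over the paper's and costs the convexity and interior-support assumptions plus two nontrivial trace-regularity theorems; the subtraction trick $w=u-v$ is the idea you are missing, and it reduces the whole lemma to the homogeneous Dirichlet initial-boundary value problem.
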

\begin{proof}
Notice that the solution $u$ of (\ref{eq_wave}) is in $C([0, T]; H^2(M))$. 
Let us consider the solution $v$ of 
\begin{align*}
&\p_t^2 v - c_0^2 \Delta v = 0, & \text{in $(0,T) \times M$},
\\\nonumber& v|_{x \in \p M} = \Lambda_c f, & \text{in $(0,T) \times \p M$},
\\\nonumber& v|_{t=T} = \phi, \quad \p_t v|_{t=T}= 0,  & \text{in $M$},
\end{align*}
where $\phi = \Delta^{-1} \Lambda_c f|_{t=T}$.
The difference $w = u - v$ satisfies 
\begin{align*}
&\p_t^2 w - c_0^2 \Delta w = (c^2 - c_0^2) \Delta u,  & \text{in $(0,T) \times M$},
\\\nonumber& w|_{x \in M} = 0,  & \text{in $(0,T) \times \p M$}, 
\\\nonumber& w|_{t=T} = u|_{t=T} - \phi,
\quad \p_t w|_{t=T}= \p_t u|_{t=T},  & \text{in $M$}.
\end{align*}
Notice that $w|_{t=T} = 0$ on $\p M$ and $\Delta u \in C([0, T]; L^2(M))$.
Thus we have $w \in C([0, T]; H_0^1(M))$, 
and $A_{c_0} \Lambda_c f = v|_{t =0} = f - w|_{t = 0} \in H_0^1(M)$.
\end{proof}

Let us recall that $A_{c_0} : H^1((0,T) \times \p M) \to H^1(M)$ is bounded \cite{Lasiecka1986},
and that $f = R_{c_0} \Lambda_{c_0} f$ \cite{Stefanov2009a}.
Thus there is $C > 0$ depending only on $c_0$ and $M$ such that
\begin{align*}
\norm{\tilde f - f}_{H^1(M)} &= \norm{R_{c_0} (\Lambda_c f - \Lambda_{c_0} f)}_{H^1(M)}
\\&\le C \norm{\Lambda_c f - \Lambda_{c_0} f}_{H^1((0,T) \times \p M)}.
\end{align*}
Notice that $\Lambda_c f - \Lambda_{c_0} f = w|_{(0,T) \times \p M}$,
where $w$ is the solution of the wave equation,
\begin{align*}
&\p_t^2 w - c_0^2 \Delta w = (c^2 - c_0^2) \Delta u,  & \text{in $(0,\infty) \times \R^n$},
\\\nonumber& w|_{t=0} = 0, \quad \p_t w|_{t=0}= 0,  & \text{in $\R^n$},
\end{align*}
and $u$ is the solution of (\ref{eq_wave}).

By Theorem \ref{thm_regularity} in the appendix below, 
there is a constant $C > 0$ depending only on $c_0$, $T$ and $M$
such that 
\begin{align*}
\norm{w|_{(0,T) \times \p M}}_{H^1((0,T) \times \p M)} 
&\le 
C \norm{(c^2 - c_0^2) \Delta u}_{L^2((0,T) \times \K)}
\\&\le 
C \norm{c^2 - c_0^2}_{L^\infty(\K)} \norm{\Delta u}_{L^2((0,T) \times \K)}.
\end{align*}
By applying the energy estimates, see e.g. \cite[p. 153]{Ladyzhenskaya1985}, on $\p_t u$ we see that there is a constant $C > 0$ depending only on $T$ and the bounds
\begin{align}
\label{energy_bounds}
\epsilon_0 \le c(x), \quad \norm{c}_{C^1(\R^n)} \le C_0,
\end{align}
such that 
\begin{align*}
\norm{\Delta u}_{L^2((0,T) \times \K)} \le C \norm{f}_{H^2(\K)}
\le C \norm{f}_{H^{1}(\K)}^{1/2} \norm{f}_{H^{3}(\K)}^{1/2}.
\end{align*}
Thus there is $C > 0$ depending only on $C_f$, $c_0$, $T$, $M$ and the bounds (\ref{energy_bounds}) 
such that
\begin{align*}
\norm{\Lambda_c f - \Lambda_{c_0} f}_{H^1((0,T) \times \p M)}
\le C \norm{c^2 - c_0^2}_{L^\infty(\K)} \norm{f}_{H^{1}(\K)}^{1/2}.
\end{align*}

By \cite{Stefanov2009a} there is a constant $C(c) > 0$
depending on $c$ such that 
\begin{align}
\label{stability_c0}
\norm{f}_{H^{1}(M)} \le C(c) \norm{\Lambda_c f}_{H^1((0,T) \times \p M)}.
\end{align}
Theorem \ref{thm_main} follows after we show that $C(c)$ is uniformly bounded in a neighborhood of $c_0$.
We do not know how to prove this simply by perturbing the estimate (\ref{stability_c0}).
However, the uniform boundedness follows from a modification of the Carleman estimate in \cite{Liu2012}.
We will prove the modified estimate in the next section
under the geometric assumptions formulated in Theorem \ref{thm_main},
see Corollary \ref{cor_stable_observability} below.


\section{A Carleman estimate with explicit constants}

As the proof of the Carleman estimate below is of geometric nature, we will 
consider the wave equation 
\begin{align*}
&\p_t^2 u - \Delta_{g,\mu} u = 0, &\text{in $(0, T) \times M$}
\end{align*}
on a smooth compact Riemannian manifold $(M,g)$
with boundary. Here $\Delta_{g,\mu}$ is the weighted Laplace-Beltrami operator,
\begin{align*}
\Delta_{g,\mu} u = \mu^{-1} \div_g (\mu \grad_g u),
\end{align*}
where $\mu \in C^\infty(M)$ is strictly positive
and $\div_g$ and $\grad_g$ denote the divergence and the gradient 
with respect to the metric tensor $g$.
In order to prove Theorem \ref{thm_main}, 
we apply the results below to $g = c(x)^{-2} dx^2$
and $\mu(x) = c(x)^{n-2}$. Then $\Delta_{g,\mu} = c(x)^2 \Delta$, where $\Delta$ is the Euclidean Laplacian. 

We denote by $|\cdot|_g$, $(\cdot, \cdot)_g$, $dV_g$, $dS_g$, $\nu_g$ and $D^2_g$
the norm, the inner product,
the volume and the surface measures, the exterior unit normal vector
and the Hessian with respect to $g$.
Moreover, we write $\div_{g,\mu} X := \mu^{-1} \div_g (\mu X)$ for a vector field $X$,
and will omit writing the subscript $g$ when considering a fixed Riemannian metric tensor.

Let us recall the pointwise Carleman estimate from \cite{Liu2012}.

\begin{theorem}
\label{th_carleman_ineq}
Let $\ell \in C^3(M)$ and $\rho > 0$ satisfy
\begin{align}
\label{carleman_ineq_rho}
D^2 \ell (X, X) \ge \rho |X|^2, \quad X \in T_x M,\ x \in M.
\end{align}
Let $\tau > 0$ and $u \in C^2(\R \times M)$.
We define
\begin{align*}
&v := e^{\tau \ell} u, \quad 
\vartheta := \tau ((\Delta_\mu \ell - \rho) v + 2 (\grad v, \grad \ell)),
\\&
Y := \tau ((\p_t v)^2 + |\grad v|^2 - (\tau \rho - \tau^2 |\nabla \ell|^2) v^2) \grad \ell.
\end{align*}
Then
\begin{align*}
&e^{2 \tau\ell} (\p_t^2 u - \Delta_\mu u)^2/2 
- \p_t (\vartheta\p_t v) + \div_\mu(\vartheta \grad v) + \div_\mu Y
\\&\quad\ge
e^{2 \tau\ell}(\rho \tau - 1)((\p_t u)^2 + |\nabla u|^2)/2
+ e^{2 \tau\ell}(2 \rho |\grad \ell|^2 \tau - C_1) \tau^2 u^2,
\end{align*}
where $C_1 = \rho^2 + \max_{x \in M} |\grad (\Delta_\mu \ell(x))|^2$.
\end{theorem}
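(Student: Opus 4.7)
The standard strategy for pointwise Carleman estimates is the Hörmander conjugation method. Set $v := e^{\tau \ell} u$; using $\grad u = e^{-\tau \ell}(\grad v - \tau v \grad \ell)$, a direct computation yields
\begin{align*}
e^{\tau \ell}(\p_t^2 - \Delta_\mu) u = S v + \vartheta, \quad S v := \p_t^2 v - \Delta_\mu v - (\tau^2 |\grad \ell|^2 - \tau \rho) v,
\end{align*}
with $\vartheta$ as in the statement. The elementary inequality $(S v + \vartheta)^2 \geq 2 S v \cdot \vartheta$ then gives $e^{2\tau\ell}(\p_t^2 u - \Delta_\mu u)^2/2 \geq S v \cdot \vartheta$, so the proof reduces to rewriting $S v \cdot \vartheta$ as an exact divergence in $(t,x)$ plus a quadratic form in $v$ and its derivatives that exceeds the claimed right-hand side.

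I would carry out this rewriting by repeated application of the Leibniz rule. For the time part, $\vartheta \p_t^2 v = \p_t(\vartheta \p_t v) - \p_t \vartheta \cdot \p_t v$, and the mixed piece $(\grad \p_t v, \grad \ell) \p_t v = \tfrac{1}{2}(\grad((\p_t v)^2), \grad \ell)$ is reshaped via $(\grad f, X) = \div_\mu(f X) - f \div_\mu X$. For the Laplacian part, $-\vartheta \Delta_\mu v = -\div_\mu(\vartheta \grad v) + (\grad \vartheta, \grad v)$, and expanding $(\grad \vartheta, \grad v)$ produces the central term $2\tau D^2 \ell(\grad v, \grad v)$, which by the convexity hypothesis satisfies $2\tau D^2 \ell(\grad v, \grad v) \geq 2 \tau \rho |\grad v|^2$; this is the only analytic input. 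Alongside it appear the cross term $2\tau D^2 v(\grad v, \grad \ell) = \tau(\grad(|\grad v|^2), \grad \ell)$, which further splits as a $\grad\ell$-divergence plus a $\Delta_\mu \ell$ correction, and the lower-order error $\tau v (\grad(\Delta_\mu \ell), \grad v)$. The zeroth-order pairing $-\vartheta(\tau^2|\grad \ell|^2 - \tau \rho) v$ is treated similarly, using $2 v (\grad v, \grad \ell) = (\grad(v^2), \grad \ell)$ together with the convexity-derived identity $(\grad(|\grad \ell|^2), \grad \ell) = 2 D^2 \ell(\grad \ell, \grad \ell) \geq 2\rho |\grad \ell|^2$, which generates the positive $2\tau^3 \rho |\grad \ell|^2 v^2$. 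The role of the $-\rho$ offset in $\vartheta$ and the $\tau \rho$ correction in $S$ is precisely to guarantee that every $\Delta_\mu \ell$-weighted term cancels; the surviving $\grad\ell$-proportional divergences then combine into $\div_\mu Y$ after using $|\grad v|^2 = e^{2\tau\ell} |\grad u|^2 + 2 \tau v (\grad v, \grad \ell) - \tau^2 v^2 |\grad \ell|^2$ to transfer $|\grad v|^2$-terms into $e^{2\tau\ell} |\grad u|^2$-terms on the right.

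The two remaining error terms, $\tau v (\grad(\Delta_\mu \ell), \grad v)$ and $-\tau^2 \rho^2 v^2$, are then controlled by Young's inequality against small fractions of the main positive contributions $\tau \rho |\grad v|^2$ and $2\tau^3 \rho |\grad \ell|^2 v^2$; the price of this absorption is the $-1$ in the coefficient $\rho \tau - 1$ and the explicit constant $C_1 = \rho^2 + \max |\grad(\Delta_\mu \ell)|^2$. The computation is algebraically deterministic once the splitting $Sv + \vartheta$ is fixed; the main obstacle is purely combinatorial, namely verifying the exact cancellation of all $\Delta_\mu \ell$-contributions and checking that the surviving $\grad\ell$-proportional divergences assemble into precisely the $Y$ of the statement.
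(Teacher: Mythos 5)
The paper does not actually prove Theorem \ref{th_carleman_ineq}; it quotes it verbatim from the cited reference \cite{Liu2012}, so there is no in-paper proof to compare against. Your outline is nonetheless the standard (and, in essence, the cited reference's) route: the conjugation identity $e^{\tau \ell}(\p_t^2 - \Delta_\mu)u = Sv + \vartheta$ with $Sv = \p_t^2 v - \Delta_\mu v - (\tau^2|\grad \ell|^2 - \tau\rho)v$ checks out, the reduction via $(Sv+\vartheta)^2 \ge 2\,Sv\cdot\vartheta$ is correct, and you correctly locate the two uses of convexity ($D^2\ell(\grad v,\grad v)\ge\rho|\grad v|^2$ and $D^2\ell(\grad\ell,\grad\ell)\ge\rho|\grad\ell|^2$), the cancellation of the $\Delta_\mu\ell$-weighted bulk terms, and the origin of the loss $-1$ and of $C_1$ from absorbing $\tau v(\grad(\Delta_\mu\ell),\grad v)$ and $-\tau^2\rho^2 v^2$. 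What remains is only the routine bookkeeping you acknowledge (assembling the $\grad\ell$-proportional divergences into exactly the stated $Y$ and confirming the precise numerical constants), which does not affect the soundness of the approach.
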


\begin{lemma}
\label{lem_boundary_terms}
Let $T > 0$, $u \in C^2([0, T] \times M)$,
$\tau > 1$, $\rho > 0$ and let $\ell \in C^3(M)$.
We define $v$, $\vartheta$ and $Y$ as in Theorem \ref{th_carleman_ineq}.
Moreover, we write $dm := \mu dV$ and $dn := \mu dS$.
Then 
\begin{align*}
&\int_0^T \int_M \div_\mu(\vartheta \grad v) + \div_\mu Y\, dm dt 
\\&\quad\le 
(6C_2 + C_3)  e^{B_\ell \tau} \tau \int_0^T \int_{\p M} (\p_t u)^2 + |\grad u|^2 dn dt
\\&\quad\quad+
 (7 C_2^3 \tau^2 + 5 C_2 C_3 \tau) e^{B_\ell \tau} \tau \int_0^T \int_{\p M}  u^2 dn dt,
\end{align*}
where 
\begin{align*}
C_2 = \max_{x \in M} (|\grad \ell(x)| + 1),
\quad 
C_3 = \max_{x \in M} (\rho + |\Delta_\mu \ell|)/2,
\end{align*}
and $B_\ell = 2\max_{x \in M} \ell(x)$. Moreover,
\begin{align*}
&\int_M |\vartheta \p_t v| dm 
\\&\quad\le 
C_F (2 C_2^2 \tau + C_3) e^{B_\ell \tau} \tau 
\ll( \int_M (\p_t u)^2 + |\grad u|^2 dm + \int_{\p M} u^2 dn \rr),
\end{align*}
where $C_F \ge 1$ is a constant satisfying the
Friedrichs' inequality 
\begin{align}
\label{ineq_Friedrichs}
\int_M \phi^2 dm \le C_F \ll( \int_M |\grad \phi|^2 dm + \int_{\p M} \phi^2 dn \rr), \quad \phi \in C^\infty(M).
\end{align}
\end{lemma}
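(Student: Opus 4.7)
The plan is to handle the two inequalities separately; both are bookkeeping calculations built on the weighted divergence theorem and, for the second, Friedrichs' inequality.

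For the first inequality, I would apply the weighted divergence theorem in $x$ to rewrite
\begin{align*}
\int_0^T \int_M \bigl(\div_\mu(\vartheta \grad v) + \div_\mu Y\bigr) dm\, dt
= \int_0^T \int_{\p M} \bigl(\vartheta (\grad v, \nu) + (Y, \nu)\bigr) dn\, dt,
\end{align*}
and then bound the boundary integrand pointwise. Substituting $v = e^{\tau \ell} u$, so that $\grad v = e^{\tau \ell}(\grad u + \tau u \grad \ell)$ and $\p_t v = e^{\tau \ell} \p_t u$, both $\vartheta (\grad v, \nu)$ and $(Y, \nu)$ expand into explicit polynomials in $u$, $\grad u$ and $\p_t u$ whose coefficients involve $\tau$, $\grad \ell$, $\Delta_\mu \ell$ and $\rho$. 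I would estimate $|\grad \ell| \le C_2$, $|\Delta_\mu \ell - \rho| \le 2 C_3$ and $|\tau \rho - \tau^2 |\grad \ell|^2| \le 2 \tau C_3 + \tau^2 C_2^2$, bound $e^{2 \tau \ell}$ on $\p M$ by $e^{B_\ell \tau}$, and split all cross terms $|u||\grad u|$ by weighted AM--GM $|u||\grad u| \le \epsilon |\grad u|^2 + u^2/(4 \epsilon)$, with $\epsilon$ chosen proportional to $1/(\tau C_2)$. This specific weight is crucial: the dominant $\tau C_2^2 |u||\grad u|$ contribution coming from the $2 \tau |\grad \ell|^2$ part of $\vartheta$ must not inflate the $|\grad u|^2$ coefficient beyond $O(C_2)$; its cost is instead paid in the $u^2$ coefficient as an $O(\tau^2 C_2^3)$ term, which matches the $7 C_2^3 \tau^2$ factor in the claim.

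For the second inequality, I would argue pointwise. Writing $\vartheta = \tau e^{\tau \ell}((\Delta_\mu \ell - \rho + 2 \tau |\grad \ell|^2) u + 2 (\grad u, \grad \ell))$ and $\p_t v = e^{\tau \ell} \p_t u$, Cauchy--Schwarz followed by $2|a||b| \le a^2 + b^2$ gives
\begin{align*}
|\vartheta \p_t v| \le \tau e^{2 \tau \ell} \bigl( A\, (\p_t u)^2 + A\, u^2 + C_2 |\grad u|^2 \bigr)
\end{align*}
with $A$ bounded by an absolute multiple of $2 C_2^2 \tau + C_3$. Integrating in $x$ against $dm$, the $(\p_t u)^2$ and $|\grad u|^2$ pieces are already in the claimed form; the leftover $\int_M u^2\, dm$ is converted into $C_F \bigl( \int_M |\grad u|^2\, dm + \int_{\p M} u^2\, dn \bigr)$ by Friedrichs' inequality (\ref{ineq_Friedrichs}), which accounts for the factor $C_F$ on the right-hand side.

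The argument carries no subtle analytic content. The one delicate step is the constant tracking in the first inequality, where the $\tau$-dependent weights in AM--GM must be tuned carefully so that the $|\grad u|^2$ coefficient remains of order $\tau$ rather than $\tau^2$. Everything else is routine polynomial arithmetic in $\tau$, $C_2$ and $C_3$.
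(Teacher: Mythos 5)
Your proposal is correct and follows essentially the same route as the paper: the divergence theorem reduces everything to a boundary integrand that is expanded explicitly in $u$, $\grad u$, $\p_t u$, the cross terms are split by $\tau$-weighted AM--GM exactly so that the $|\grad u|^2$ coefficient stays of order $C_2+C_3$ while the $u^2$ coefficient absorbs the $\tau^2 C_2^3$ growth, and the second bound uses pointwise Cauchy--Schwarz followed by Friedrichs' inequality applied to $\int_M u^2\, dm$. No gaps; only the routine arithmetic remains to be written out.
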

\begin{proof}
We have
\begin{align*}
\grad v &= e^{\tau \ell} \ll( \grad u + \tau u \grad \ell \rr), 
\\
\vartheta &= \tau e^{\tau \ell} \ll( (\Delta_\mu \ell - \rho + 2\tau |\grad \ell|^2)u + 2(\grad u, \grad \ell) \rr),
\\
Y &= \tau e^{2 \tau \ell} \ll( (\p_t u)^2 + |\grad u|^2 + 2 \tau u (\grad u, \grad \ell) + (2\tau^2 |\grad \ell|^2 - \tau \rho) u^2 \rr)\grad \ell.
\end{align*}
Thus 
\begin{align*}
&\tau^{-1} e^{-2 \tau \ell} (\vartheta (\grad v, \nu) + (Y, \nu))
\\&\quad= 
[ (\p_t u)^2 + |\grad u|^2 + 4 \tau u (\grad u, \grad \ell) 
\\&\quad\quad\quad+
(4 \tau^2 |\grad \ell|^2 - 2 \tau \rho + \tau \Delta_\mu \ell) u^2 ] (\grad \ell, \nu)
\\&\quad\quad+
\ll( (\Delta_\mu \ell - \rho + 2 \tau |\grad \ell|^2 ) u + 2 (\grad u, \grad \ell) \rr) (\grad u, \nu)
\end{align*}
We estimate the cross terms as follows
\begin{align*}
4 \tau |u (\grad u, \grad \ell)|
&\le 
2\tau^2 |\grad \ell|^2 u^2 + 2 |\grad u|^2,
\\
|\Delta_\mu \ell - \rho| |u (\grad u, \nu)|
&\le 
|\Delta_\mu \ell - \rho|/2\ u^2 + |\Delta_\mu \ell - \rho|/2\ |\grad u|^2,
\\
2 \tau |\grad \ell|^2 |u (\grad u, \nu)|
&\le 
\tau^2 |\grad \ell|^3 u^2 + |\grad \ell| |\grad u|^2,
\end{align*}
and get
\begin{align*}
&\tau^{-1} e^{-2 \tau \ell} |\vartheta (\grad v, \nu) + (Y, \nu)|
\\&\quad\le
|\grad \ell|(\p_t u)^2 + (6|\grad \ell| + (\rho + |\Delta_\mu \ell|)/2)|\grad u|^2 
\\&\quad\quad+
\ll(7 \tau^2 |\grad \ell|^3 + (4\tau |\grad \ell| + 1) (\rho + |\Delta_\mu \ell|)/2 \rr) u^2.
\end{align*}
The first claim follows from the divergence theorem.

For the second claimed inequality, notice that 
\begin{align*}
&\tau^{-1} e^{-2\tau \ell} |\vartheta \p_t v| = |((\Delta_\mu \ell - \rho) u + 2 (\grad u + \tau u \grad \ell, \grad \ell)) \p_t u|
\\&\quad\le
|(\Delta_\mu \ell - \rho) + 2 \tau |\grad \ell|^2| |u \p_t u| 
+ 2|\grad \ell| |\grad u| |\p_t u|
\\&\quad\le
|(\Delta_\mu \ell - \rho)/2 + \tau |\grad \ell|^2|(u^2 + (\p_t u)^2)
+ |\grad \ell|(|\grad u|^2 + (\p_t u)^2).
\end{align*}
Hence
\begin{align*}
&e^{-B_\ell \tau} \tau^{-1} \int_M |\vartheta \p_t v| dm 
\\&\quad\le 
(\max |\Delta_\mu \ell - \rho|/2 + \tau \max |\grad \ell|^2) \ll(\int_M u^2 dm + \int_M (\p_t u)^2 dm \rr)
\\&\quad\quad
+ \max |\grad \ell| \int_M |\grad u|^2 + (\p_t u)^2 dm,
\end{align*}
and the second claimed inequality follows from (\ref{ineq_Friedrichs}) with $\phi = u$.
\end{proof}

\begin{remark}
\label{rem_energy}
Let $u \in C^2([0, \infty) \times M)$ be a solution of 
\begin{align}
\label{eq_wave_control}
&\p_t^2 u(t,x) - \Delta_\mu u(t, x) = 0, & (t,x) \in (0,\infty) \times M,
\end{align}
Then the energy,
\begin{align*}
E(t) := \int_M (\p_t u(t))^2 + |\grad u(t)|^2 dm,
\end{align*}
satisfies
\begin{align*}
E(t) = E(0) + 2\int_0^t \int_{\p M} \p_\nu u(s)\ \p_t u(s)\ dn ds.
\end{align*}
\end{remark}

\begin{theorem}[Observability inequality]
\label{thm_obs_ineq}
Suppose that there is a strictly convex function $\ell \in C^3(M)$ with no critical points.
Let $\rho, r > 0$ satisfy 
\begin{align}
\label{conditions_for_ell}
D^2 \ell (X, X) > \rho |X|^2, \quad |\grad \ell(x)| > r, 
\end{align}
for all $X \in T_x M$ and $x \in M$.
We write $\beta_\ell = 2 \min_{x \in M} \ell(x)$ and suppose that 
\begin{align}
\label{def_T}
T > 2 C_F (2 C_2^2 \tau + C_3) e^{(B_\ell - \beta_\ell) \tau} \tau ,
\quad \text{where }
\tau = \max \ll(\frac{3}{\rho}, \frac{C_1}{2 \rho r^2},1 \rr).
\end{align}
Let $u \in C^2([0, T] \times M)$ be a solution of 
(\ref{eq_wave_control}).
Then
\begin{align*}
E(0) \le C \int_0^T \int_{\p M} (\p_t u)^2 + |\grad u|^2 + u^2 dn dt,
\end{align*}
where $C$ depends only on $C_2$, $C_3$, $\beta_\ell$, $B_\ell$, $\tau$ and $T$.
\end{theorem}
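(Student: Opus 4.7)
The plan is to integrate the pointwise Carleman inequality of Theorem \ref{th_carleman_ineq} over the cylinder $(0,T)\times M$ against $dm\,dt$, kill the left‑hand side term involving $\p_t^2 u - \Delta_\mu u$ by the wave equation, and convert the remaining divergence integrals into boundary integrals via the divergence theorem and Lemma \ref{lem_boundary_terms}. Condition (\ref{conditions_for_ell}) and the choice $\tau = \max(3/\rho, C_1/(2\rho r^2), 1)$ from (\ref{def_T}) ensure that $(\rho\tau - 1)/2 \ge \rho\tau/3 \ge 1$ and that $2\rho|\grad\ell|^2\tau - C_1 \ge 2\rho r^2 \tau - C_1 \ge 0$, so the $u^2$ contribution on the right of Theorem \ref{th_carleman_ineq} is nonnegative and can be discarded. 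Using $e^{2\tau\ell} \ge e^{\tau\beta_\ell}$ pointwise, one obtains
\[
e^{\tau\beta_\ell}\tfrac{\rho\tau}{3}\int_0^T E(t)\,dt \;\le\; \Bigl|\Bigl[\textstyle\int_M \vartheta\,\p_t v\,dm\Bigr]_0^T\Bigr| + \int_0^T\!\!\int_M \bigl(\div_\mu(\vartheta\grad v) + \div_\mu Y\bigr)\,dm\,dt.
\]

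The spatial integral on the right is controlled by the first bound of Lemma \ref{lem_boundary_terms}: it is dominated by $C' e^{\tau B_\ell}\tau^3\mathcal{B}(u)$, where $\mathcal{B}(u) := \int_0^T\!\int_{\p M}\bigl((\p_t u)^2 + |\grad u|^2 + u^2\bigr)\,dn\,dt$ and $C'$ depends only on $C_2,C_3$. The time‑boundary contribution is handled by the second bound of Lemma \ref{lem_boundary_terms} evaluated at $t = 0$ and $t = T$, yielding
\[
\Bigl|\Bigl[\textstyle\int_M \vartheta\,\p_t v\,dm\Bigr]_0^T\Bigr| \;\le\; C_F(2C_2^2\tau + C_3)\,e^{\tau B_\ell}\tau\,\Bigl( E(0) + E(T) + \int_{\p M}\!\bigl(u(0)^2 + u(T)^2\bigr)\,dn\Bigr),
\]
and the traces $u(0), u(T)$ on $\p M$ are absorbed into $\mathcal{B}(u)$ by a one‑dimensional trace inequality in the time variable.

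Now Remark \ref{rem_energy} handles the energies: Cauchy--Schwarz on the boundary integrand $\p_\nu u\,\p_t u$ gives $|E(t) - E(0)| \le \mathcal{B}(u)$ for every $t \in [0,T]$, hence $E(T) \le E(0) + \mathcal{B}(u)$ and $\int_0^T E(t)\,dt \ge T E(0) - T\mathcal{B}(u)$. Substituting these bounds into the preceding display and gathering the coefficients of $E(0)$ yields
\[
\Bigl(T e^{\tau\beta_\ell}\tfrac{\rho\tau}{3} \;-\; 2 C_F(2C_2^2\tau + C_3)\,e^{\tau B_\ell}\tau\Bigr) E(0) \;\le\; C''\,\mathcal{B}(u),
\]
with $C''$ depending on the same parameters listed in the statement. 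Since $\rho\tau/3 \ge 1$, the strict inequality (\ref{def_T}) guarantees that the bracketed coefficient of $E(0)$ is strictly positive, and dividing through gives the observability inequality.

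The main obstacle is the bookkeeping of the two competing exponential weights: the useful energy information on the left scales like $e^{\tau\beta_\ell}$ (obtained by taking $\ell$ to its minimum), whereas every boundary estimate coming from Lemma \ref{lem_boundary_terms} contributes the larger factor $e^{\tau B_\ell}$. The time‑boundary term $\bigl[\int_M \vartheta\,\p_t v\,dm\bigr]_0^T$ reintroduces $E(0)$ itself into the right‑hand side with this worse weight, so absorption on the left is only possible when $T$ is large enough to compensate for the ratio $e^{(B_\ell - \beta_\ell)\tau}$; the quantitative lower bound (\ref{def_T}) is tailored precisely to this absorption step.
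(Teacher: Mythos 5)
Your proof follows the paper's own argument essentially verbatim: integrate the Carleman estimate, use the choice of $\tau$ to discard the nonnegative $u^2$ term and keep the energy term with weight $e^{\beta_\ell\tau}$, control the divergence and time-boundary terms with the two inequalities of Lemma \ref{lem_boundary_terms}, use Remark \ref{rem_energy} to compare $E(t)$ with $E(0)$ up to boundary data, and absorb the $E(0)$ contribution on the right via the largeness condition (\ref{def_T}). The only differences are cosmetic bookkeeping (you carry the factor $\rho\tau/3$ where the paper simply bounds $(\rho\tau-1)/2$ below by $1$), so the argument is correct and matches the paper's proof.
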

\begin{proof}
We will integrate the inequality of Theorem \ref{th_carleman_ineq}.
Notice that 
\begin{align*}
\rho \tau - 1 \ge 2 \quad \text{and} \quad 2 \rho |\grad \ell|^2 \tau - C_1 \ge 0,
\end{align*}
whence
\begin{align}
\label{ineq_prelim_energy_boundary}
&e^{\beta_\ell \tau}
\int_0^T \int_M (\p_t u)^2 + |\nabla u|^2 dm dt
\\\nonumber&\quad\le 
\int_0^T \int_M 
-\p_t (\vartheta\p_t v) + \div_\mu(\vartheta \grad v) + \div_\mu Y dm dt.
\end{align}
We estimate the left-hand side of (\ref{ineq_prelim_energy_boundary}) from below using Remark \ref{rem_energy},
\begin{align*}
&e^{\beta_\ell \tau}\int_0^T \int_M (\p_t u)^2 + |\nabla u|^2 dm dt
\\&\quad\ge
e^{\beta_\ell \tau} T E(0) - 
e^{\beta_\ell \tau} T 
\ll(\norm{\p_\nu u}_{L^2((0,T) \times \p M)}^2 + \norm{\p_t u}_{L^2((0,T) \times \p M)}^2 \rr).
\end{align*}
To estimate the right-hand side of (\ref{ineq_prelim_energy_boundary}) from above,
we notice that 
\begin{align*}
&C_0^{-1} \ll| \int_0^T \int_M \p_t (\vartheta\p_t v) dm dt \rr|
\\&\quad\le E(0) + E(T) + \int_{\p M} u(0)^2 + u(T)^2 dn 
\\&\quad\le 2 E(0) + \norm{\p_\nu u}_{L^2((0,T) \times \p M)}^2 + \norm{\p_t u}_{L^2((0,T) \times \p M)}^2 
\\&\quad\quad+ \int_{\p M} u(0)^2 + u(T)^2 dn,
\end{align*}
where $C_0 = C_F (2 C_2^2 \tau + C_3) e^{B_\ell \tau} \tau $.
Hence
\begin{align*}
&(e^{\beta_\ell \tau} T - 2C_0 ) E(0) 
\\&\quad\le
C_0 \ll(\norm{\p_\nu u}_{L^2((0,T) \times \p M)}^2 + 2C_{Tr} \norm{u}_{H^1((0,T) \times \p M)}^2\rr)
\\&\quad\quad+
(6C_2 + C_3)  e^{B_\ell \tau} \tau \ll( \norm{\p_t u}_{L^2((0,T) \times \p M)}^2 + \norm{\grad u}_{L^2((0,T) \times \p M)}^2 \rr)
\\&\quad\quad+
 (7 C_2^3 \tau^2 + 5 C_2 C_3 \tau) e^{B_\ell \tau} \tau \norm{u}_{L^2((0,T) \times \p M)}^2
\\&\quad\quad+
e^{\beta_\ell \tau} T 
\ll(\norm{\p_\nu u}_{L^2((0,T) \times \p M)}^2 + \norm{\p_t u}_{L^2((0,T) \times \p M)}^2 \rr),
\end{align*}
where $C_{Tr} \ge 1$ is a constant satisfying the
trace inequality 
\begin{align*}
\norm{\phi(0)}_{L^2(\p M)}^2 + \norm{\phi(T)}_{L^2(\p M)}^2 \le C_{Tr} 
\norm{\phi}_{H^1((0,T) \times \p M)}^2.
\end{align*}
\end{proof}

We recall the following trace regularity result by Lasiecka, Lions and Triggiani \cite{Lasiecka1986}.

\begin{theorem}
Let $f \in C_0^\infty(M)$. There is $C > 0$ depending only on $T$, $M$ and $c|_{\R^n \setminus M}$
such that 
\begin{align*}
\norm{\p_\nu u}_{L^2((0,T) \times \p M)} \le C \norm{\Lambda_c f}_{H^1((0,T) \times \p M)},
\end{align*}
where $u$ is the solution of (\ref{eq_wave}).
\end{theorem}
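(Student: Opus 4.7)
The plan is to deduce the bound from the Lasiecka--Lions--Triggiani hidden-regularity theorem \cite{Lasiecka1986}, applied not on $M$ itself but on a bounded annular domain exterior to $M$ on which the wave speed coincides with $c_0$. Doing so forces the constant to depend only on $T$, $M$ and $c|_{\R^n \setminus M}$, with no dependence on the possibly large variations of $c$ inside $M$.

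First, I would invoke finite speed of propagation. Since $\supp f \subset \K \subset M^\inter$ and $c = c_0$ on $\R^n \setminus M$, any wavefront that leaves $M$ subsequently propagates at the background speed $c_0$. Consequently one can pick $R > 0$, depending only on $T$, $M$ and $\sup_{\R^n \setminus M} c_0$, so that $u$ vanishes identically on $(0,T) \times (\R^n \setminus B_R)$. Set $D := B_R \setminus \bar M$, a bounded smooth domain whose boundary has the two components $\p M$ and $\p B_R$, and on which $c \equiv c_0$.

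Next, the restriction $u|_{(0,T) \times D}$ satisfies the mixed Dirichlet problem
\begin{align*}
& \p_t^2 u - c_0^2 \Delta u = 0 && \text{in } (0,T) \times D, \\
& u|_{t=0} = 0, \quad \p_t u|_{t=0} = 0 && \text{in } D, \\
& u = \Lambda_c f && \text{on } (0,T) \times \p M, \\
& u = 0 && \text{on } (0,T) \times \p B_R,
\end{align*}
with the corner compatibility $\Lambda_c f|_{t=0} = f|_{\p M} = 0$ required for $H^1$ Dirichlet data. Applying the hidden-regularity result of \cite{Lasiecka1986} to this problem on the bounded smooth domain $D$ then yields a constant $C$, depending only on $T$, $D$ and $c_0|_{\bar D}$, such that
\begin{align*}
\norm{\p_\nu u}_{L^2((0,T) \times \p D)} \le C \norm{\Lambda_c f}_{H^1((0,T) \times \p M)}.
\end{align*}
Restricting to the $\p M$ component of $\p D$ gives the stated bound.

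The only substantive choice is where to apply the hidden-regularity result, and the key point is that working on the exterior annular domain $D$ rather than on $M$ prevents the constant from depending on $c|_M$. There is no real obstacle beyond verifying that $R$, and hence $D$ and $c_0|_{\bar D}$, can be selected using only $T$, $M$ and $c|_{\R^n \setminus M}$; this follows immediately from finite propagation, since after exiting $M$ the wave travels at the known background speed $c_0$.
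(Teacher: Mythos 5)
Your proposal is correct and is essentially identical to the paper's own proof: the paper likewise applies \cite[Th. 2.1]{Lasiecka1986} in the annular domain $B(0,R)\setminus M$ with $R$ chosen, via finite speed of propagation, so that $u$ vanishes on $\p B(0,R)$ for $t\in[0,T]$. The extra remarks you make about the corner compatibility of the Dirichlet data and the dependence of the constant only on the exterior wave speed are consistent with, and merely flesh out, the paper's one-line argument.
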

\begin{proof}
Apply \cite[Th. 2.1]{Lasiecka1986} in the domain $B(0, R) \setminus M$,
where $R > 0$ is large enough so that $u(t, x)$ vanish for $t \in [0,T]$
and $x \in \p B(0,R)$.
\end{proof}

\begin{corollary}[Stable observability]
\label{cor_stable_observability}
Suppose that there is a strictly convex function $\ell \in C^3(M)$ with no critical points
on the Riemannian manifold $(M, c_0^{-2} dx^2)$.
Let $U_0$ be a bounded $C^2$ neighborhood of $c_0$.
Then there is a $C^1$ neighborhood $U$ of $c_0$
and constants $C, T > 0$ satisfying the following:
for all $c \in U_0 \cap U$ 
the solution of the wave equation (\ref{eq_wave})
satisfies the observability inequality
\begin{align*}
\norm{f}_{H^1_0(M)} \le C \norm{\Lambda_c f}_{H^1((0, T) \times \p M)}.
\end{align*}
\end{corollary}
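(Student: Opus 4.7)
The plan is to invoke Theorem~\ref{thm_obs_ineq} with constants that are uniform over $c \in U_0 \cap U$, then translate its right-hand side into a multiple of $\norm{\Lambda_c f}_{H^1((0,T)\times \p M)}$ using the trace regularity theorem stated just above, while recognising its left-hand side as a uniform multiple of $\norm{f}_{H^1_0(M)}^2$.

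First I would fix the strictly convex function $\ell \in C^3(M)$ supplied by the hypothesis on $c_0$ and track the dependence on $c$ of every constant entering Theorem~\ref{thm_obs_ineq} when one sets $g = c^{-2} dx^2$ and $\mu = c^{n-2}$. The Christoffel symbols of $g$ are continuous functions of $c \in C^1$, so the Hessian $D^2_g \ell$, the gradient $\grad_g \ell = c^2 \grad \ell$ and $\Delta_{g,\mu} \ell$ all depend continuously on $c$ in $C^1$ with $\ell$ held fixed; this handles $\rho$, $r$, $C_2$, $B_\ell$ and $\beta_\ell$. The remaining constants $C_1$ and $C_3$ involve $\grad(\Delta_{g,\mu}\ell)$ and $\Delta_{g,\mu}\ell$ and therefore need a $C^2$ bound on $c$, which is exactly what the bounded $C^2$ neighbourhood $U_0$ supplies. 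Since the inequalities in (\ref{conditions_for_ell}) are strict at $c_0$, they persist with, say, constants $\rho/2$ and $r/2$ on a sufficiently small $C^1$-neighbourhood $U$; the time $T$ prescribed by (\ref{def_T}) may then be fixed once and for all, and Theorem~\ref{thm_obs_ineq} yields a single observability constant valid for every $c \in U_0 \cap U$.

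Next I would apply the resulting inequality to the restriction of the solution $u$ of (\ref{eq_wave}) to $(0,T) \times M$, which still solves the wave equation there with boundary trace $\Lambda_c f$. Because $\supp(f) \subset M^\inter$ and $\p_t u|_{t=0} = 0$, the uniform upper and lower bounds on $c$ give
\begin{align*}
E(0) = \int_M |\grad_g f|_g^2\, dm \ges \norm{f}_{H^1_0(M)}^2.
\end{align*}
On $\p M$ I split $|\grad u|^2 = |\grad_\tau u|^2 + (\p_\nu u)^2$: the tangential gradient of $u|_{\p M} = \Lambda_c f$, together with $(\p_t \Lambda_c f)^2$ and $(\Lambda_c f)^2$, is dominated by $\norm{\Lambda_c f}_{H^1((0,T)\times \p M)}^2$, while the normal derivative $\p_\nu u$ is controlled by the same norm via the trace regularity theorem just before the corollary, whose constant depends only on $T$, $M$ and $c|_{\R^n \setminus M}$ and is therefore uniform over $U_0$.

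The main obstacle is precisely the uniformity verified in the first step: one must check item by item that the convexity and non-critical-point conditions and the upper bounds on $C_1, C_2, C_3, B_\ell$, as well as the auxiliary Friedrichs and trace constants appearing in Lemma~\ref{lem_boundary_terms} and in the proof of Theorem~\ref{thm_obs_ineq}, all survive small $C^1$-perturbations of $c$ subject to the standing $C^2$-bound from $U_0$. The latter auxiliary constants depend only on $c$ and $1/c$ through the metric $g$ and so are readily bounded; the delicate point is simply to keep a positive lower bound on the Hessian and on $|\grad_g \ell|$ throughout $U$. Once that uniformity is in hand, the conclusion follows from the norm bookkeeping already sketched.
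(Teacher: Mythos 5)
Your proposal is correct and follows essentially the same route as the paper: the paper's proof is a one-sentence appeal to the perturbation argument of \cite{Liu2012}, asserting that (\ref{conditions_for_ell}) persists and that the constants $C_1$, $C_2$, $C_3$, $\beta_\ell$, $B_\ell$, $\tau$, $T$ stay bounded for $c \in U_0 \cap U$, which is exactly the uniformity bookkeeping you carry out explicitly before combining Theorem \ref{thm_obs_ineq} with the trace regularity theorem. Your version merely spells out the details (continuity of the metric quantities in $c$, the $C^2$ bound for $C_1$ and $C_3$, and the splitting of $|\grad u|^2$ on $\p M$ into tangential and normal parts) that the paper leaves implicit.
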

\begin{proof}
As in \cite{Liu2012}, we see that (\ref{conditions_for_ell})
holds for the Riemannian manifold $(M, c^{-2} dx^2)$
if $c \in U$ and $U$ is small enough,
and also that the constants 
$C_1$, $C_2$, $C_3$, $\beta_\ell$, $B_\ell$, $\tau$ and $T$
for $(M, c^{-2} dx^2)$ stay bounded when $c$ belongs to $U_0 \cap U$.
\end{proof}

\section*{Appendix: on the regularity of traces}

Let us denote by $\pi_0$ the projection on $T^*(\{0\} \times \K)$
from the subset of $T^* \R^{1+n}$ lying on $\{0\} \times \K$,
and let $\pi_{\p M}$ be the analogous projection with respect to the set $(0,T) \times \p M$.
Furthermore, let us denote by $\gamma_{x,\xi}$ the geodesic satisfying $\gamma(0) = x$ and $\dot \gamma(0) = \xi$, where $\dot \gamma$ is the differential of $\gamma$, and define 
\begin{align*}
\tau(x,\xi) := \min\{t > 0;\ \gamma_{x,\xi}(t) \in \p M \}.
\end{align*}

The measurement operator 
\begin{align*}
\Lambda_c : C_0^\infty(\K) \to C^\infty((0,T) \times \p M) 
\end{align*}
is the sum of two zeroth order Fourier integral operators with canonical relations 
$C_+$ and $C_-$,
and the union $C_+ \cup C_-$ consist of the points 
\begin{align*}
(\pi_0(s, y, \sigma, \eta), \pi_{\p M}(t, x, \tau, \xi))
\end{align*}
such that $(t, x, \tau, \xi) \in T^* \R^{1+n}$ and $(s, y, \sigma, \eta) \in T^* \R^{1+n}$
lie on the same null bicharacteristic of $p(x, \tau, \xi) := \tau^2 - c(x)^2|\xi|^2$,
$x \in \p M$, $y \in \K$ and $s = 0$. 
Moreover, in a neighborhood of such a point $(y_0, \eta_0) \in T^* \K$ that 
\begin{align} \label{non_tangential}
\dot \gamma_{y_0, \eta_0}(\tau(y_0, \eta_0)) \notin T^*(\p M),
\end{align}
the canonical relation $C_+$ is given by the graph of the map
\def\t{\intercal}
\begin{align*}
(y, \eta) \mapsto (\tau(y, \eta), \gamma_{y, \eta}(\tau(y, \eta)), |\eta|, \dot \gamma_{y, \eta}^\t(\tau(y, \eta))),
\end{align*}
where $\dot \gamma^\t$ stands for the tangential projection on $T^*(\p M)$.
An analogous statement holds for $C_-$ with $\eta$ replaced by $-\eta$, see e.g. \cite{Stefanov2009a}
for more details. In particular, when restricted near $(y_0, \eta_0)$ satisfying (\ref{non_tangential})
the measurement operator is continuous from $H^1$ to $H^1$ \cite[Cor. 25.3.2]{Hormander1985a}.

The case of tangential intersection, that is, the case when (\ref{non_tangential}) does not hold,
is more delicate and in general $C_\pm$ can be a one-sided fold \cite{Tataru1998}.
In this case the general theory of Fourier integral operators implies 
continuity from $H^1$ to $H^{3/4}$ \cite{Greenleaf1994}.
The following example illustrates how singularities in $C_\pm$ arise.

\begin{example}
Suppose $M \subset \R^2$, $c = 1$ identically and that near $(1,0)$ the boundary $\p M$ coincides with the unit circle
\begin{align*}
S^1 := \{x \in \R^2;\ |x| = 1\}.
\end{align*}
If $\K$ intersects the vertical line through $(1,0)$, 
then $C_+$ is not a local canonical graph. 
\end{example}
\begin{proof}
Let us parametrize $S^1$ by $x(a) := (\cos a, \sin a)$, $a \in (-\pi, \pi)$. 
Then the cotangent space $T_{x(a)}^* S^1$ is spanned by 
$x^\perp(a) := (-\sin a, \cos a)$, and
$C_+$ consist locally of the points $(y, \eta, t, a, \tau, \alpha)$ where
\begin{align*}
y = x(a) - t \frac{\eta}{|\eta|},
\quad 
\tau = |\eta|,
\quad 
\alpha = \eta \cdot x^\perp(a).
\end{align*}
Hence $C_+$ is locally parametrized by $\eta$, $t$ and $a$.
Let us consider the projection 
\begin{align*}
\pi_L(t, a, \eta) = (y, \eta), \quad \pi_L : C_+ \to T^*(\K).
\end{align*}
Its differential is of the form 
\begin{align*}
d\pi_L = \ll( \begin{array}{ccc}
- \frac{\eta}{|\eta|} & x^\perp(a) & * \\
0 & 0 & I 
\end{array} \rr),
\end{align*}
where $I$ is the identity matrix and the explicit form of $*$ is not needed for our purposes. 
Thus $d\pi_L (\delta t, \delta a, \delta \eta) = 0$ if and only if $\delta \eta = 0$ and
\begin{align*}
- \frac{\eta}{|\eta|} \delta t + x^\perp(a) \delta a = 0.
\end{align*}
The latter equation has a non-trivial solution if and only if $\eta$ is a scalar multiple of $x^\perp(a)$,
that is, the geodesic $\gamma(t) = y + t \frac{\eta}{|\eta|}$ intersects $S^1$ tangentially. 
In this case $d\pi_L$ has a one dimensional kernel and $C_+$ is not a local canonical graph 
(in fact, $C_+$ is a two-sided fold \cite{Tataru1998}).
\end{proof}

If $\p M$ is strictly convex with respect to the Riemannian metric $c^{-2} dx^2$,
then (\ref{non_tangential}) holds for all $(y_0, \eta_0) \in T^* \K$.
In this case we have the following regularity result.

\begin{theorem} \label{thm_regularity}
Suppose that $\p M$ is strictly convex with respect to the Riemannian metric $c^{-2} dx^2$
and let $\K \subset M^\inter$ be compact.
Let $T > 0$ and let $w$ be the solution of the wave equation
\begin{align*}
&\p_t^2 w - c^2 \Delta w = F,  & \text{in $(0,T) \times \R^n$},
\\\nonumber& w|_{t=0} = 0, \quad \p_t w|_{t=0}= 0,  & \text{in $\R^n$},
\end{align*}
where $F \in C_0^\infty((0,T) \times \K)$.
Then there is $C > 0$ independent of $F$ such that 
\begin{align*}
\norm{w|_{(0,T) \times \p M}}_{H^1((0,T) \times \p M)} \le C \norm{F}_{L^2((0,T) \times \K)}.
\end{align*}
\end{theorem}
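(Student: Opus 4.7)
My plan is to realize the source-to-boundary-trace map $\mathcal{T}\colon F \mapsto w|_{(0,T)\times\p M}$ as a Fourier integral operator and to apply the continuity theorem under the local-graph condition forced by convexity, exactly as in the analysis of $\Lambda_c$ described above. Writing the solution as $w = E \ast F$, where $E$ is the forward fundamental solution of $\p_t^2 - c^2\Delta$ on $\R^{1+n}$, the operator $E$ is (microlocally) a sum of two FIOs of order $-1$ associated with the forward and backward flows along null bicharacteristics of the principal symbol $p(x,\tau,\xi) = \tau^2 - c(x)^2|\xi|^2$. Composing with the boundary trace on $(0,T)\times\p M$ yields $\mathcal{T} = \mathcal{T}_+ + \mathcal{T}_-$.

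The canonical relations $C_\pm$ of $\mathcal{T}_\pm$ are the natural generalizations of those described for $\Lambda_c$: $C_+$ consists of pairs $((s,y,\sigma,\eta),\pi_{\p M}(t,x,\tau,\xi))$ with $(s,y,\sigma,\eta),(t,x,\tau,\xi) \in T^*\R^{1+n}$ lying on a common null bicharacteristic of $p$, $y\in\K$, $x\in\p M$ and $t>s$, while $C_-$ is the analogue with $\eta$ replaced by $-\eta$. Because $\p M$ is strictly convex with respect to $c^{-2}\,dx^2$, every geodesic issuing from $\K$ meets $\p M$ transversally, so the non-tangentiality condition (\ref{non_tangential}) holds at each exit point. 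This makes $C_\pm$ a local canonical graph, and the standard FIO continuity theorem \cite[Cor.~25.3.2]{Hormander1985a}, together with the order $-1$ of $E$, yields $\mathcal{T}\colon H^s((0,T)\times\K)\to H^{s+1}((0,T)\times\p M)$ continuously; specializing to $s=0$ gives the theorem.

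The main obstacle is the order count. Naively, inverting $\p_t^2 - c^2\Delta$ gains one derivative while the codimension-one trace loses $1/2$, so one would expect only $H^{1/2}$ regularity of $w|_{\p M}$; the remaining half-derivative is the microlocal payoff of the local-graph structure forced by convexity, the same mechanism that promotes $\Lambda_c$ from the generic $H^1 \to H^{3/4}$ behavior (in the presence of tangential intersections, cf.\ the example above) to $H^1 \to H^1$. A more elementary alternative via Duhamel's principle handles the time derivative cleanly: one has $\p_t w|_{\p M}(t,x) = \int_0^t \Lambda_c[F(s,\cdot)](t-s,x)\,ds$, so the $L^2 \to L^2$ continuity of $\Lambda_c$ together with Minkowski's inequality yields $\|\p_t w|_{\p M}\|_{L^2}\le C\|F\|_{L^2}$ and hence also $\|w|_{\p M}\|_{L^2}$; however, controlling the tangential spatial derivatives $\nabla_{\p M} w|_{\p M}$ still requires the full microlocal reasoning sketched above.
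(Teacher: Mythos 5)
Your overall strategy---realize $F\mapsto w|_{(0,T)\times\p M}$ as a Fourier integral operator and use the strict convexity of $\p M$ to rule out tangential rays---is the same as the paper's, but the microlocal bookkeeping contains a genuine error at the decisive step. You claim that non-tangentiality makes $C_\pm$ a \emph{local canonical graph} and then invoke \cite[Cor.~25.3.2]{Hormander1985a}. This cannot be right for dimensional reasons: the source lives on $(0,T)\times\K$, which is $(n+1)$-dimensional, while the boundary cylinder $(0,T)\times\p M$ is $n$-dimensional, so a canonical relation in $T^*((0,T)\times\p M)\times T^*((0,T)\times\K)$ is never the graph of a symplectomorphism. (The graph structure you describe is the one for $\Lambda_c$, whose input manifold $\K\subset\R^n$ and output manifold $(0,T)\times\p M$ both have dimension $n$; it does not transfer to the source problem, where the input depends on time as well.) Your order count compounds the problem: the forward parametrix is an FIO of order $-3/2$ (not $-1$; the ``gain of one derivative'' is a Sobolev statement, not the FIO order, and the two differ precisely because the flow-out relation is not a graph), and composing with the codimension-one trace, of order $1/4$, yields order $-5/4$, as the paper states.

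What the paper actually does at this point is: convexity makes the exit time $s$ a local function of $(t,x,\hat\xi)$, so the composed canonical relation $C$ is globally parametrized by $(t,x,\xi)$ and the projection $C\to T^*((0,T)\times\K)$ is an immersion of rank $2(n+1)-1$ onto (part of) the characteristic variety. One then applies the continuity theorem for FIOs with non-graph canonical relations, \cite[Th.~4.3.2]{Hormander1971}, whose rank hypotheses are exactly what convexity guarantees; with order $-5/4$ and the one-unit dimension drop this lands precisely at $L^2\to H^1$. Your Duhamel observation, $\p_t w|_{\p M}(t,\cdot)=\int_0^t \Lambda_c[F(s,\cdot)](t-s,\cdot)\,ds$, is a correct and pleasant partial check, but it rests on an $L^2\to L^2$ bound for $\Lambda_c$ that is not established here, and, as you acknowledge, it leaves the tangential derivatives untreated---so the microlocal argument, with the corrections above, is still the load-bearing part of the proof.
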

\begin{proof}
The operator $F \mapsto w$ coincides with 
the forward parametrix of \cite[Th. 26.1.14]{Hormander1985a} modulo an operator with a smooth kernel.
The composition of the parametrix with the restriction on $(0,T) \times \p M$
is a Fourier integral operator of order $-5/4$ with the canonical relation $C$
consisting of such points 
\begin{align*}
&(t, x, |\xi|, \xi, s, \gamma_{x, \hat \xi}(s-t), |\xi|, \dot\gamma_{x, \hat \xi}^\t(s-t)),
\end{align*}
that $\gamma_{x, \hat \xi}(s-t) \in \p M$, $(x,\xi) \in T^* \K \setminus 0$
and $t, s \in (0,T)$.
Here $\hat \xi = \xi / |\xi|$.

As above, the strict convexity of $\p M$ implies that $s$ is locally a function of 
$(t, x, \hat \xi)$. 
In particular, the canonical relation is parametrized by 
$(t, x, \xi)$ and the projection $C \to T^* ((0,T)\times \K)$ has the rank $2(n+1) - 1$.
We may apply \cite[Th. 4.3.2]{Hormander1971} to get the claimed continuity.
%
\end{proof}

\bibliographystyle{abbrv} 
\bibliography{main}

\end{document}